\documentclass[11pt, twoside]{article}

\usepackage[english]{babel}

\usepackage{amssymb}
\usepackage{mathrsfs}
\usepackage{amsmath}
\usepackage{amsthm}
\usepackage{amsfonts}
\usepackage{latexsym}
\usepackage{indentfirst}
\usepackage{color}
\usepackage{txfonts}
\usepackage{enumerate}

\usepackage[colorlinks=true,
linkcolor=blue,
citecolor=red,
urlcolor=magenta,
]{hyperref}

\usepackage{txfonts}
\usepackage{anysize}

\allowdisplaybreaks

\newtheorem{theorem}{Theorem}[section]
\newtheorem{lemma}[theorem]{Lemma}

\theoremstyle{definition}

\newtheorem{definition}[theorem]{Definition}
\newcounter{assum}

\renewcommand{\appendix}{\par
\setcounter{section}{0}%
\setcounter{subsection}{0}%
\setcounter{subsubsection}{0}%
\gdef\thesection{\@Alph\c@section}%
\gdef\thesubsection{\@Alph\c@section.\@arabic\c@subsection}%
\gdef\theHsection{\@Alph\c@section.}%
\gdef\theHsubsection{\@Alph\c@section.\@arabic\c@subsection}%
\csname appendixmore\endcsname
}

\numberwithin{equation}{section}

\begin{document}
\title{\bf\Large
A Counterexample to Kenig's Interpolation Problem for Sobolev
Spaces with Zero Boundary Conditions\footnotetext{\hspace{-0.35cm} 2020 {\it
Mathematics Subject Classification}.
Primary 46E35; Secondary 46B70, 35J25, 35B65, 35J67.
\endgraf {\it Key words and phrases.}
Sobolev space, complex interpolation, zero boundary condition,
Dirichlet problem, Lipschitz domain.
\endgraf This project is partially supported by the National
Natural Science Foundation of China (Grant Nos. 12431006, 12371093, and 12501118),
the Beijing Natural Science Foundation (Grant No. 1262011),
the Fundamental Research Funds for the Central Universities
(Grant No. 2253200028), and Longyuan Young Talents of Gansu Province.
}}
\author{Xiaosheng Lin, Dachun Yang\footnote{Corresponding
author, E-mail: \texttt{dcyang@bnu.edu.cn}/{\color{red}\today}/Final version.},
\ \ Sibei Yang, Wen Yuan
and Yangyang Zhang}
\date{}
\maketitle

\vspace{-0.7cm}

\begin{center}
\begin{minipage}{13cm}
{\small {\bf Abstract.}\quad
Let $n\in \mathbb N\cap[2,\infty)$. In this article, we show that there exists a
bounded $C^1$ domain $\Omega\subset \mathbb R^n$ such that, for any given
$s\in(1,2)\setminus\{\frac32\}$,
\begin{align*}
\left[H_0^1(\Omega),H^2(\Omega)\cap H_0^1(\Omega)\right]_{s-1}
=H^s(\Omega)\cap H_0^1(\Omega)=H_0^s(\Omega)
\end{align*}
with equivalent norms, but
\begin{align*}
\left[H_0^1(\Omega),H^2(\Omega)\cap H_0^1(\Omega)\right]_{\frac12}
\subsetneqq H^{\frac32}(\Omega)\cap H_0^1(\Omega),
\end{align*}
which provides a counterexample to Problem 3.3.19 of Kenig in
[CBMS Regional Conf. Ser. in Math. 83, 1994].
As applications, we prove that for such a domain $\Omega$
\begin{align*}
H^2(\Omega)\cap H_0^1(\Omega)\subsetneqq D(-\Delta_D)
\end{align*}
(the domain of the Dirichlet Laplacian operator $-\Delta_D$ on $\Omega$)
and construct a solution of the homogeneous heat equation
with zero Dirichlet boundary condition, which does not belong to
$L^2((0,T);H^2(\Omega)\cap H_0^1(\Omega))$
for any given $T\in(0,\infty)$.
}
\end{minipage}
\end{center}

\vspace{0.1cm}

%\tableofcontents

\section{Introduction}
The interpolation theory of function spaces is essentially a bridge connecting
function spaces of different scales.
This is precisely why the properties of the endpoint spaces play a decisive
role in determining the properties of the interpolation spaces.
This theory is not only an abstract tool in functional analysis
but also a central pillar in fields, such as harmonic analysis
and partial differential equations (see, for example,
\cite{bl76,lm72,lp61,lp64,mm14,t78}). In particular,
the interpolation theory of Sobolev spaces
on domains has important and extensive applications in partial differential equations
(see, for example, \cite{chm15,jk95,lm72,mm14}).

The goal of this article is to answer a question raised by Kenig \cite[Problem 3.3.19]{k94}
concerning the complex interpolation of Sobolev spaces on
bounded Lipschitz domains in $\mathbb{R}^n$ with zero boundary conditions.
To describe this question, we need to recall some necessary concepts.

Denote by $\mathcal{S}(\mathbb{R}^n)$ the \emph{space of all Schwartz functions} on $\mathbb{R}^n$ equipped with
the well-known topology determined by a countable family of norms and by $\mathcal{S}'(\mathbb{R}^n)$
its \emph{dual space} (that is, the space of all tempered distributions
equipped with the weak-$\ast$ topology).
Let $O\subset\mathbb{R}^n$ be an open set. Denote by
the \emph{symbol $C_{\rm c}^\infty(O)$} the set of all smooth functions on $O$
with compact support contained in $O$ equipped with inductive
limit topology. Write ${\mathcal D}(O):=C_{\rm c}^\infty(O)$ and  denote by ${\mathcal D'}(O)$
the space of all continuous linear functionals on ${\mathcal D}(O)$
equipped with the weak-$\ast$ topology.

Let $n\ge2$, $s\in\mathbb{R}$, and $\Omega\subset\mathbb{R}^n$ be a domain,
which means it is a connected open set. Throughout the article, the
\emph{Bessel potential Sobolev space} $H^s(\mathbb R^n)$ is defined as the set
of all tempered distributions $u\in\mathcal{S}'(\mathbb{R}^n)$ satisfying
\begin{align*}
\|u\|_{H^s(\mathbb R^n)}:=\left[\int_{\mathbb{R}^n}(1+|\xi|^2)^{s}|\widehat{u}(\xi)|^2\,d\xi\right]^{\frac{1}{2}}<\infty,
\end{align*}
where $\widehat{u}$ denotes the Fourier transform of $u$.
Recall that, for any $u\in \mathcal{S}(\mathbb{R}^n)$,
its Fourier transform $\widehat{u}$ is defined by setting,
for any $\xi\in \mathbb{R}^n$,
$$\widehat{u}(\xi):=\int_{\mathbb{R}^n}u(x)e^{-2\pi ix\xi}\,dx.$$
Moreover, the \emph{restricted Sobolev space} $H^s(\Omega)$
is defined by setting
\begin{align*}
H^s(\Omega):=\{f\in \mathcal{D}'(\Omega):\ f=g|_\Omega\text{ for some }g\in H^s(\mathbb{R}^n)\}.
\end{align*}
Meanwhile, for any $f\in H^s(\Omega)$, let
\begin{align*}
\left\|f\right\|_{H^s(\Omega)}:=\inf\left\{\|g\|_{H^s(\mathbb{R}^n)}:\
f=g|_\Omega,\ g\in H^s(\mathbb{R}^n)\right\}.
\end{align*}
Moreover, define
\begin{align*}
H_0^s(\Omega):=\overline{C_{\rm c}^\infty(\Omega)}^{H^s(\Omega)}.
\end{align*}
In \cite[Problem 3.3.19]{k94}, Kenig asked whether, for any given bounded Lipschitz domain $\Omega,$
\begin{align}\label{nianqing}
\left[H_0^1(\Omega),H^2(\Omega)\cap H^1_0(\Omega)\right]_{\frac{1}{2}}
=H^{\frac{3}{2}}(\Omega)
\cap H^1_0(\Omega),
\end{align}
where $[H_0^1(\Omega),H^2(\Omega)\cap H^1_0(\Omega)]_{\frac{1}{2}}$
denotes the \emph{complex interpolation space} between $H_0^1(\Omega)$ and
$H^2(\Omega)\cap H_0^1(\Omega)$ with interpolation parameter $1/2.$

In this article, we construct a counterexample to show that
\eqref{nianqing} does not hold in general.
More precisely, we have the following theorem.
\begin{theorem}\label{thm:main}
Let $n\in\mathbb{N}\cap[2,\infty).$
Then there exists a bounded $C^1$ domain $\Omega\subset\mathbb{R}^n$
such that
\begin{align}\label{keypoint}
\left[H_0^1(\Omega),H^2(\Omega)\cap H^1_0(\Omega)\right]_{\frac{1}{2}}
\subsetneqq H^{\frac{3}{2}}(\Omega)\cap H^1_0(\Omega).
\end{align}
\end{theorem}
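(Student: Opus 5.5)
The first step is the easy inclusion. Since $\Omega$ is Lipschitz, Stein's universal extension operator $E$ is bounded from $H^k(\Omega)$ to $H^k(\mathbb R^n)$ for $k=1,2$. Interpolating $E$ and the restriction map gives $[H^1(\Omega),H^2(\Omega)]_{\frac12}=H^{\frac32}(\Omega)$. The identity maps $H_0^1(\Omega)\to H^1(\Omega)$ and $H^2(\Omega)\cap H_0^1(\Omega)\to H^2(\Omega)$ are bounded, so
\begin{align*}
\left[H_0^1(\Omega),H^2(\Omega)\cap H^1_0(\Omega)\right]_{\frac12}\hookrightarrow H^{\frac32}(\Omega).
\end{align*}
The same argument applied to the pair $(H_0^1(\Omega),H_0^1(\Omega))$ gives the inclusion into $H_0^1(\Omega)$. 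This proves ``$\subset$'' in \eqref{keypoint} for every Lipschitz domain, and all the work lies in strictness.

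For strictness, the plan is to find a necessary condition for membership in the interpolation space and then build a function in $H^{\frac32}(\Omega)\cap H_0^1(\Omega)$ that violates it. The model is the half-space. There, interpolating $H_0^1$ and $H^2\cap H_0^1$ at $\theta=\frac12$ forces a Lions--Magenes $H^{\frac12}_{00}$-type condition on the tangential gradient. Tangential derivatives of a function vanishing on the boundary should satisfy $\int |\nabla_{T}u|^2\,\frac{dx}{d(x)}<\infty$, where $d(x):=\operatorname{dist}(x,\partial\Omega)$. This condition is exactly what distinguishes $s=\frac32$ from the other values of $s$, and it is invisible in $H^{\frac32}\cap H_0^1$ itself.

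I will construct $\Omega$ locally as $\{x_n>\psi(x')\}$ with $\psi\in C^1$ whose modulus of continuity of $\nabla\psi$ is not Dini. A concrete choice is $\omega(t)\approx [\log(1/t)]^{-1/2}$, concentrated near one boundary point via a lacunary or oscillating sum, and smooth elsewhere. I will flatten the boundary with the Dahlberg--Kenig--Stein regularized map $\Phi(x',x_n)=(x',x_n+\theta_{x_n}*\psi(x'))$. The map $u\mapsto u\circ\Phi$ is bounded from $H_0^1(\Omega)$ to $H_0^1(\mathbb R^n_+)$. On $H^2(\Omega)\cap H_0^1(\Omega)$, it maps into a weighted space whose second derivatives carry errors controlled by $|\nabla^2\Phi|\lesssim \omega(x_n)/x_n$. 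The controlled quantity I would use is the tangential derivative $T u:=\partial_{x'}(u\circ\Phi)$. Interpolating $T$, I will aim to show that every $u$ in the interpolation space satisfies
\begin{align*}
\int_{\mathbb R^n_+}\left|\partial_{x'}(u\circ\Phi)\right|^2\frac{dx}{x_n}<\infty .
\end{align*}

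The decisive step, and the main obstacle, is this. For a $C^1$ non-Dini boundary, the regularized tangent directions $\partial_{x'}\Phi$ differ from the true tangent directions by roughly $\omega(x_n)$. I must produce $u\in H^{\frac32}(\Omega)\cap H_0^1(\Omega)$ for which this discrepancy makes the weighted integral diverge. The candidate is $u(x)=d(x)\,\chi(x)$, or a variant $u=\eta(x)\,(x_n-\psi(x'))$ with $\eta$ a cutoff. Such a $u$ lies in $H^{\frac32}\cap H_0^1$ because $d$ is Lipschitz with $H^{\frac32}$-control of $\nabla d$ near a $C^1$ boundary; I will verify this by a Besov/Littlewood--Paley estimate. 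At the same time, $\partial_{x'}(u\circ\Phi)\approx x_n\nabla\psi-\theta_{x_n}*\nabla\psi$ oscillates with size about $\omega(x_n)$, which gives
\begin{align*}
\int_0^1\frac{\omega(t)^2}{t}\,dt=\infty .
\end{align*}
The delicate points are two. First, the interpolation bound must be proved with constants robust enough to survive the non-smooth $\Phi$; here I would first try Stein's interpolation theorem for analytic families, or a direct retraction/coretraction argument. Second, $\omega$ must be tuned so that $u$ is still in $H^{\frac32}$ while $\int\omega^2/t=\infty$.
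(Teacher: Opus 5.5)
Your proof of the inclusion ``$\subset$'' is fine; it is the same functoriality argument the paper uses for Theorem 1.2(i). The strictness part has a genuine gap at the decisive step, because the test function cannot do what you need, whatever $\omega$ you choose. Take your variant $u=\eta\,(x_n-\psi(x'))$. We have $\nabla u=\eta\,(-\nabla\psi(x'),1)+(x_n-\psi(x'))\nabla\eta$, and the last term is Lipschitz. So $u\in H^{\frac32}(\Omega)$ amounts to $\eta\nabla\psi(x')\in H^{\frac12}(\Omega)$. Integrating out $x_n,y_n$ in the Gagliardo seminorm over $\Omega\times\Omega$ shows that this forces $\nabla\psi\in H^{\frac12}$ on the relevant patch of $\mathbb{R}^{n-1}$, i.e.\ $\psi\in H^{\frac32}_{\rm loc}$. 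But in that case $\partial_{x'}(u\circ\Phi)=\eta(\Phi)\,(\theta_{x_n}*\nabla\psi-\nabla\psi)+O(x_n)$, and
\[
\int_0^1\|\theta_t*f-f\|_{L^2}^2\,\frac{dt}{t}\approx\int_{\mathbb{R}^{n-1}}|\widehat f(\xi)|^2\log(2+|\xi|)\,d\xi\lesssim\|f\|_{H^{1/2}}^2 ,
\]
so your weighted integral is finite. Thus either $u\notin H^{\frac32}(\Omega)$ or the weighted integral converges. The integral only sees an $L^2$ (logarithmic) modulus of $\nabla\psi$, which is dominated by the $H^{\frac32}$ norm of $u$. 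The divergence of $\int_0^1\omega(t)^2\,dt/t$ for the sup-modulus is therefore beside the point. Your claim that $\nabla d$ is controlled in $H^{\frac12}$ near any $C^1$ boundary is unjustified for the same reason, so $d\chi$ does not rescue the plan. Separately, the boundedness of $T$ from $H^2(\Omega)\cap H^1_0(\Omega)$ into $H^1_0(\mathbb{R}^n_+)$ does not follow from $|\nabla^2\Phi|\lesssim\omega(x_n)/x_n$. The term $(\partial_n u)\circ\Phi\,\nabla\partial_{x'}\Phi$ would need $\int|(\partial_nu)\circ\Phi|^2\omega(x_n)^2x_n^{-2}\,dx<\infty$, while $\int_0^1\omega(t)^2t^{-2}\,dt=\infty$. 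You would need either a cancellation argument or the vanishing of $\partial_\nu u$ on $\partial\Omega$.

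That last point shows what is missing. The paper takes the specific $C^1$ domain of Costabel, for which $H^2(\Omega)\cap H^1_0(\Omega)=H^2_0(\Omega)$. The interpolation space then collapses to $[H^1_0(\Omega),H^2_0(\Omega)]_{\frac12}=R_\Omega(\widetilde H^{\frac32}(\Omega))$. Membership in it constrains the whole gradient, including the normal derivative, not a delicate tangential quantity. The element of $H^{\frac32}(\Omega)\cap H^1_0(\Omega)$ with nontrivial normal derivative is not built by hand. It comes from the Jerison--Kenig regularity theorem, as the solution $v$ of $\Delta v=1$, $v|_{\partial\Omega}=0$. Finally, suppose $U\in\widetilde H^{\frac32}(\Omega)$ extended $v$. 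Then $\Delta U-\mathbf 1_\Omega\in H^{-\frac12}(\mathbb{R}^n)$ is supported on $\partial\Omega$, hence zero because $H^{-\frac12}_{\partial\Omega}(\mathbb{R}^n)=\{0\}$. Pairing $\Delta U=\mathbf 1_\Omega$ with $\varphi\equiv1$ near $\overline\Omega$ then gives $0=|\Omega|$. No flattening, tuning of $\omega$, or analytic-family interpolation is needed.
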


This theorem has the following four applications.

\textbf{(I) Criticality of the interpolation scale.} Theorem \ref{thm:main} indicates that the index $s=\frac{3}{2}$ is a distinguished point
in the complex interpolation scale with zero boundary conditions. Indeed, the
expected interpolation identity may fail exactly at
$s=\frac{3}{2}$. The next theorem shows that this phenomenon is genuinely
critical: for any given bounded Lipschitz domain, the interpolation formula holds
throughout the subcritical range $s\in(1,\frac{3}{2})$, while, for the
particular bounded $C^1$ domain in Theorem \ref{thm:main}, it also remains valid for all
$s\in(\frac{3}{2},2)$. Thus, the obstruction is concentrated at the single
critical index $s=\frac{3}{2}$.
\begin{theorem}\label{thm:sharp}
Let $n\in\mathbb{N}\cap[2,\infty)$.
\begin{itemize}
\item  [$\mathrm{(i)}$]
For any given bounded Lipschitz domain $\Omega$ and for any $s\in (1,\frac{3}{2})$,
\begin{align}\label{yumao}
\left[H_0^1(\Omega),H^2(\Omega)\cap H^1_0(\Omega)\right]_{s-1}
=H^{s}(\Omega)\cap H^1_0(\Omega)=H^{s}_0(\Omega)
\end{align}
with equivalent norms.

\item[$\mathrm{(ii)}$]
If $\Omega$ is the same as in
Theorem \ref{thm:main}, then, for any $s\in(1,2)\setminus\{\frac{3}{2}\}$, \eqref{yumao}
holds with equivalent norms.
\end{itemize}
\end{theorem}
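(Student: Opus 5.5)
We argue by realizing the interpolation couple as fractional powers of the Dirichlet Laplacian and comparing the two sides with known scales. Let $A:=-\Delta_D$ on $L^2(\Omega)$, self-adjoint and positive; put $X_\theta:=D(A^{\theta})$ with the graph norm. Then $H_0^1(\Omega)=X_{1/2}$ with equivalent norms, and complex interpolation of domains of fractional powers of a positive self-adjoint operator is exact: $[X_\alpha,X_\beta]_\theta=X_{(1-\theta)\alpha+\theta\beta}$. The difficulty is that the upper endpoint $H^2(\Omega)\cap H_0^1(\Omega)$ need not equal $X_1$ (indeed the paper later shows strict inclusion for the domain of Theorem \ref{thm:main}). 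So we do not interpolate $X_{1/2}$ and $X_1$ directly. Instead we obtain both inclusions in \eqref{yumao} separately, each from an interpolation inequality between $H_0^1(\Omega)$ and some space comparable with $H^2(\Omega)\cap H_0^1(\Omega)$.

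\textbf{Part (i): the easy inclusion and the identification with $H_0^s(\Omega)$.} Since $H^2(\Omega)\cap H_0^1(\Omega)\hookrightarrow H^2(\Omega)$, $H_0^1(\Omega)\hookrightarrow H^1(\Omega)$, and $[H^1(\Omega),H^2(\Omega)]_{s-1}=H^s(\Omega)$ on Lipschitz domains (via Rychkov's universal extension operator), interpolation gives
\[
\left[H_0^1(\Omega),H^2(\Omega)\cap H_0^1(\Omega)\right]_{s-1}\hookrightarrow H^s(\Omega)\cap H_0^1(\Omega),
\]
the $H_0^1$ part following from the embedding into the first endpoint. For $s\in(1,\frac32)$ we identify $H^s(\Omega)\cap H^1_0(\Omega)=H_0^s(\Omega)$ as follows. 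Zero trace together with $s-1<\frac12$ means, by the Jerison--Kenig characterization, that extension by zero is bounded $H^s\cap H_0^1\to H^s(\mathbb R^n)$, so both spaces coincide with $\widetilde H^s(\Omega)=\{u\in H^s(\mathbb R^n):\operatorname{supp}u\subset\overline\Omega\}$. For the reverse inclusion we need a bounded linear map $\widetilde H^s(\Omega)\to[H_0^1,H^2\cap H_0^1]_{s-1}$ agreeing with the identity. Here we use $[\widetilde H^1,\widetilde H^2]_{s-1}=\widetilde H^s$, which holds by retraction via a Rychkov-type co-retraction for the supported scale. The point is that for $s<\frac32$ one may interpolate $H_0^1(\Omega)=\widetilde H^1(\Omega)$ against $\widetilde H^2(\Omega)=H_0^2(\Omega)\subset H^2\cap H_0^1$, which yields
\[
\widetilde H^s(\Omega)=[H_0^1,H_0^2]_{s-1}\hookrightarrow[H_0^1,H^2\cap H_0^1]_{s-1}.
\]
This closes (i).

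\textbf{Part (ii): the range $s\in(\frac32,2)$.} For the specific $C^1$ domain of Theorem \ref{thm:main}, the construction is presumably such that the Dirichlet problem $\Delta u=f\in H^{s-2}$ is solvable in $H^{s}$ for $s\in(\frac32,2)$. This is the typical behaviour of $C^1$ domains whose boundary is $C^{1,\alpha}$ away from a single bad point, or $C^{1}$ with a Dini-type modulus chosen so that $H^{2-\epsilon}$ regularity holds while $H^2$ regularity fails at the borderline. Granting this, the plan is as follows. First, using the $L^2$-based Dirichlet regularity for $\Delta$ at every level $t<2$, show that
\[
D(A^{t/2})=H^t(\Omega)\cap H_0^1(\Omega)\quad\text{for } t\in(\tfrac32,2),
\]
and, using $H^2\cap H_0^1\subset X_1$, obtain the upper bound by reiteration. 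Second, for the lower bound, pick $t\in(s,2)$ and interpolate between $H_0^1$ and $H^2\cap H_0^1$ only through the parameter range where the two scales agree. Concretely, one shows via Wolff's reiteration theorem that $Y_\theta:=[H_0^1,H^2\cap H_0^1]_\theta$ coincides with $X_{(1+\theta)/2}$ for $\theta\ne\frac12$. The argument uses the facts that $Y_\theta\subset X_{(1+\theta)/2}$ always holds, that equality holds for $\theta<\frac12$ by (i), and that the two families interpolate compatibly. This yields equality above $\frac12$ once one has an element of $Y_{\theta_1}$ with $\theta_1>\frac12$ matching $X_{(1+\theta_1)/2}$; that matching is obtained from the regularity $D(A^{t/2})=H^t\cap H_0^1$ for $t$ near 2.

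\textbf{Main obstacle.} The main obstacle is this last step: establishing, for the particular domain, the elliptic regularity $A^{-1}\colon H^{t-2}\to H^t$ for every $t<2$, and getting reiteration to pass across $\theta=\frac12$. The exceptional point $\frac32$ is exactly where the trace-space description of $H_0^1\cap H^s$ changes: for $s=\frac32$ it is the Lions--Magenes space $H_{00}^{1/2}$ phenomenon for $\nabla u$. I would try to handle it by proving the estimate separately for $s$ slightly above $\frac32$ with a constant that may blow up as $s\to\frac32$. This is allowed since only $s\ne\frac32$ is claimed.
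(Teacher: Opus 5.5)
Your part (i) is essentially the paper's argument. There, $s<\frac32$ is needed only for the identification $\widetilde H^s(\Omega)=H^s_0(\Omega)=H^s(\Omega)\cap H^1_0(\Omega)$; both interpolation embeddings hold for every $s\in(1,2)$.

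Part (ii), however, rests on a premise that is exactly backwards. The domain of Theorem~\ref{thm:main} is Costabel's $C^1$ domain, on which $H^t(\Omega)\cap H^1_0(\Omega)=H^t_0(\Omega)$ for all $t\in(\frac32,\frac52)$ (Lemma~\ref{lem:Costabel}(ii)). On this domain the Dirichlet problem has no $H^{\frac32+\varepsilon}$ regularity even for $f\equiv1$. To see this, let $v\in H^1_0(\Omega)$ solve $\Delta v=1$. Then $v\in D(A)\subset D(A^{t/2})$ for all $t\le 2$. But if $v\in H^t(\Omega)$ for some $t\in(\frac32,2)$, then $v\in H^t_0(\Omega)=R_\Omega(\widetilde H^t(\Omega))\subset R_\Omega(\widetilde H^{3/2}(\Omega))$. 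The proof of Theorem~\ref{thm:main} excludes this: an extension $U$ would satisfy $\Delta U=\mathbf 1_\Omega$ in $\mathcal D'(\mathbb R^n)$, and pairing with a function equal to $1$ near $\overline\Omega$ forces $|\Omega|=0$.

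This has three consequences for your plan:
\begin{itemize}
\item $A^{-1}$ does not map $L^2(\Omega)$ into $H^t(\Omega)$ for any $t\in(\frac32,2)$.
\item $D(A^{t/2})\neq H^t(\Omega)\cap H^1_0(\Omega)$ for these $t$.
\item Your target identity $Y_\theta=X_{(1+\theta)/2}$ is false for every $\theta\in(\frac12,1)$. Combined with the theorem itself, one has $Y_\theta\subsetneqq X_{(1+\theta)/2}$ there.
\end{itemize}
So no reiteration scheme can reach that identity. In any case, the step ``equality above $\frac12$ once one element of $Y_{\theta_1}$ matches $X_{(1+\theta_1)/2}$'' would not yield equality of spaces.

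The missing idea is that (ii) follows from this pathology of the domain, not from regularity, and needs no fractional powers. Your easy embedding already gives $[H^1_0(\Omega),H^2(\Omega)\cap H^1_0(\Omega)]_{s-1}\hookrightarrow H^s(\Omega)\cap H^1_0(\Omega)$ for all $s\in(1,2)$. For $s\in(\frac32,2)$, Costabel's identity at $t=s$ together with Lemma~\ref{dengtong}(ii) gives $H^s(\Omega)\cap H^1_0(\Omega)=H^s_0(\Omega)=R_\Omega(\widetilde H^s(\Omega))$. Your part (i) reverse embedding, $R_\Omega(\widetilde H^s(\Omega))=R_\Omega([\widetilde H^1(\Omega),\widetilde H^2(\Omega)]_{s-1})\hookrightarrow[H^1_0(\Omega),H^2(\Omega)\cap H^1_0(\Omega)]_{s-1}$, then applies verbatim. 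The paper writes this as a single chain by also using Costabel's identity at $t=2$, which turns the couple into $(H^1_0(\Omega),H^2_0(\Omega))=(\widetilde H^1(\Omega),\widetilde H^2(\Omega))$. The range $s\in(1,\frac32)$ is covered by (i).
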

\textbf{(II) Optimality of the endpoint solution space for the Dirichlet problem.}
 Consider the following inhomogenous Dirichlet problem on the bounded Lipschitz domain $\Omega$:
\begin{align}\label{eq1.1}
\begin{cases}
\Delta v=f&\ \text{in}\ \ \Omega,\\
v=0&\ \text{on}\ \ \partial\Omega,
\end{cases}
\end{align}
where $\partial\Omega$ denotes the boundary of $\Omega$.
By \cite[Theorem B]{jk95}, we find that, for any given $f\in L^2(\Omega)$, the weak solution $v$ of \eqref{eq1.1}
satisfies $v\in H^{\frac{3}{2}}(\Omega)\cap H_0^1(\Omega)$. From \eqref{keypoint} and the proof of Theorem \ref{thm:main},
we deduce that $H^{\frac{3}{2}}(\Omega)\cap H_0^1(\Omega)$ cannot be characterized by $[H_0^1(\Omega),H^2(\Omega)
\cap H^1_0(\Omega)]_{\frac{1}{2}}$ and $v\not\in [H_0^1(\Omega),H^2(\Omega)
\cap H^1_0(\Omega)]_{\frac{1}{2}}$ in general. Furthermore, by \cite[Corollary 3.2]{c19}, we conclude that
there exists a bounded $C^1$ domain $\Omega$ such that, for any $f\in C^\infty(\overline{\Omega})$, the weak solution
$v$ of the Dirichlet problem \eqref{eq1.1} does not belong to $H^{\frac32+\varepsilon}(\Omega)$ for any $\varepsilon\in(0,\infty)$.
Thus, for any given bounded Lipschitz domain $\Omega$, the solution
space $H^{\frac{3}{2}}(\Omega)\cap H_0^1(\Omega)$ for
the Dirichlet problem \eqref{eq1.1} with $f\in L^2(\Omega)$ is \emph{optimal}
in the sense that it cannot be improved to either
the interpolation space $[H_0^1(\Omega),H^2(\Omega)\cap H^1_0(\Omega)]_{\frac{1}{2}}$
or the Sobolev space
$H^{\frac32+\varepsilon}(\Omega)\cap H^1_0(\Omega)$ for any $\varepsilon\in(0,\infty)$.

\textbf{(III) The domain of the Dirichlet Laplacian.}
Let $\Omega$ be a
bounded $C^2$ domain and let
$\Delta_D$ be the Dirichlet Laplacian on $\Omega$.
By \cite[Section 6.3.2]{e10}, we find that
\begin{align*}
 D(-\Delta_D)=H^2(\Omega)\cap H_0^1(\Omega),
\end{align*}
where $D(-\Delta_D)$ denotes the domain of $-\Delta_D.$
However, Theorem \ref{thm:Laplacian-application} below shows that this characterization  fails on
bounded $C^1$ domains in general, which is essentially contained in the proof
of Theorem \ref{thm:main}.

\begin{theorem}\label{thm:Laplacian-application}
Let $\Omega$ be the same as in Theorem \ref{thm:main}. Then
\begin{align*}
 H^2(\Omega)\cap H_0^1(\Omega)
 \subsetneqq
 D(-\Delta_D).
\end{align*}
More precisely, let $v$ be the weak solution of \eqref{eq1.1} with $f\equiv 1$
and, for any $t\in[0,\infty)$, let $ u(t):=e^{t\Delta_D}v.$
Then
\begin{align*}
 u(t)\in D(-\Delta_D)
 \quad\text{but}\quad
 u(t)\notin H^2(\Omega)\cap H_0^1(\Omega).
\end{align*}
\end{theorem}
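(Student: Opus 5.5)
The plan is to reduce everything to one fact about the domain of Theorem \ref{thm:main}: the weak solution $v$ of \eqref{eq1.1} with $f\equiv1$ does not belong to $H^2(\Omega)$. I expect this to be the non-$H^2$ solution built in the proof of Theorem \ref{thm:main}. By \cite[Corollary 3.2]{c19} we may choose $\Omega$ so that $v\notin H^{\frac32+\varepsilon}(\Omega)$ for every $\varepsilon>0$, and in particular $v\notin H^2(\Omega)$. On the other hand $v\in H_0^1(\Omega)$ and $\Delta v=1\in L^2(\Omega)$ in the weak sense. By the form definition of the Dirichlet Laplacian this means $v\in D(-\Delta_D)$ with $-\Delta_D v=-1$. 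So $v$ already shows that the inclusion $H^2(\Omega)\cap H_0^1(\Omega)\subset D(-\Delta_D)$ is strict; the easy inclusion holds because for $w\in H^2\cap H_0^1$ Green's formula gives $\Delta w\in L^2$, which is the weak identity.

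Next I treat $u(t)=e^{t\Delta_D}v$. Since $-\Delta_D$ is self-adjoint and nonnegative, the semigroup maps $D(-\Delta_D)$ into itself, and in fact into $\bigcap_k D((-\Delta_D)^k)$ for $t>0$. Hence $u(t)\in D(-\Delta_D)$ for all $t\ge0$.

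To show $u(t)\notin H^2(\Omega)$, I use Duhamel's formula. Because $v\in D(-\Delta_D)$,
\begin{align*}
u(t)-v=\int_0^t \Delta_D e^{\tau\Delta_D}v\,d\tau=\int_0^t e^{\tau\Delta_D}\Delta_D v\,d\tau=\int_0^t e^{\tau\Delta_D}1\,d\tau .
\end{align*}
Set $w:=u(t)-v$. Then $w\in D(-\Delta_D)$ and
\[
\Delta_D w=e^{t\Delta_D}1-1=:g(t).
\]
Suppose, for contradiction, that $u(t)\in H^2(\Omega)$. Then $v=u(t)-w\in H^2$ would follow provided $w\in H^2$. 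So the task is to show that $w\notin H^2(\Omega)$ cannot hold unless $u(t)\notin H^2(\Omega)$; equivalently, that $w\in H^2$ fails to rescue $v$. A direct regularity bound for $w$ is not available, since its data $g(t)$ is only as regular as the domain allows.

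I therefore switch to a linearity argument that avoids regularity of $w$. Write $\Omega$'s Dirichlet solution operator as $G:=(-\Delta_D)^{-1}$, so $v=-G1$. Define
\[
X:=\{h\in L^2(\Omega):\ Gh\in H^2(\Omega)\},
\]
a linear subspace with $1\notin X$. Since $-\Delta_D u(t)=e^{t\Delta_D}1$, the condition $u(t)\in H^2$ is equivalent to $e^{t\Delta_D}1\in X$.

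The main obstacle is to show $e^{t\Delta_D}1\notin X$. My first attempt is to exploit the mechanism in the proof of Theorem \ref{thm:main}. There the failure of $H^2$ comes from a boundary singularity, and the obstruction is determined only by the boundary behaviour of $h$, namely that $h$ does not vanish near the bad boundary point. For $h$ smooth up to the boundary except near that point, I would show $Gh\in H^2$ if and only if the localized trace or weighted mean of $h$ at the singular point is zero. The heat flow then enters through two facts: $e^{t\Delta_D}1>0$ in $\Omega$ by the strong maximum principle, and it is smooth in the interior. If the criterion is instead an integral functional $\ell(h)=\int_\Omega h\,\phi$ for some nonnegative singular profile $\phi$, then positivity immediately gives $\ell(e^{t\Delta_D}1)>0$. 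Identifying this criterion from the construction of $\Omega$ is the delicate step.
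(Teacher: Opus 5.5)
\textbf{Gap: the main step, $u(t)\notin H^2(\Omega)$ for $t>0$, is not proved.} You reduce it to showing $e^{t\Delta_D}1\notin X$ and then stop. What remains is only a conjectural local criterion at a ``bad boundary point'', which you never establish. The Duhamel detour also leads nowhere.

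The easy parts are fine. The case $t=0$ (that is, $v$ itself) already gives the strict inclusion, and $u(t)\in D(-\Delta_D)$ follows from the spectral calculus as in the paper. One small correction: you cannot re-choose $\Omega$ via Costabel's Corollary 3.2, because $\Omega$ is fixed. However, $v\notin H^2(\Omega)$ still follows from Theorem \ref{thm:main}: $H^2(\Omega)\cap H_0^1(\Omega)\subset[H_0^1(\Omega),H^2(\Omega)\cap H_0^1(\Omega)]_{1/2}$, and the proof there shows $v$ is not in the latter space.

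The missing idea is already among the available tools. Lemma \ref{lem:Costabel}(ii) with $s=2$ gives $H^2(\Omega)\cap H^1_0(\Omega)=H^2_0(\Omega)$. So any $w\in H^2(\Omega)\cap H^1_0(\Omega)$ is an $H^2(\Omega)$-limit of some $g_k\in C^\infty_{\rm c}(\Omega)$. Since $\int_\Omega\Delta g_k\,dx=0$ for each $k$, passing to the limit gives $\int_\Omega\Delta w\,dx=0$. In your notation, $X\subset\{h\in L^2(\Omega):\int_\Omega h\,dx=0\}$. Your guess of an integral functional with a nonnegative weight was the right shape, but the weight is simply $\phi\equiv1$ and the obstruction is global, with no localization at a singular point.

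Next, $\Delta u(t)=e^{t\Delta_D}\Delta_D v=e^{t\Delta_D}1$. (Here $\Delta_D v=1$, so your formula $-\Delta_D u(t)=e^{t\Delta_D}1$ has a sign slip, which is harmless because $X$ is linear.) By self-adjointness and the semigroup law,
\[
\int_\Omega e^{t\Delta_D}1\,dx=(e^{t\Delta_D}1,1)_{L^2(\Omega)}=\left\|e^{t\Delta_D/2}1\right\|_{L^2(\Omega)}^2>0,
\]
because $e^{t\Delta_D/2}$ is injective. No maximum principle is needed. This is exactly the paper's argument. At $t=0$ the same computation gives $\int_\Omega\Delta v\,dx=|\Omega|\neq0$, so it also yields $v\notin H^2(\Omega)$ directly.
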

\textbf{(IV) A heat equation without the expected $H^2$-regularity.}
Let  $\Omega$ be a
bounded $C^\infty$ domain
and $v,u$ be as in
Theorem \ref{thm:Laplacian-application}. Then $u$ is the unique solution of
the following homogeneous heat equation with Dirichlet boundary condition
\begin{equation}\label{eq:heat}
 \begin{cases}
  \partial_t u-\Delta u=0 & \text{in } \Omega\times(0,T),\\
  u=0 & \text{on } \partial\Omega\times(0,T),\\
  u(0)=v & \text{in } \Omega.
 \end{cases}
\end{equation}
Since $\Omega$ is $C^\infty$,
by the standard regularity theory for the heat equation (see, for example,
\cite[Section 7.1.3]{e10}),  it follows that
\begin{align*}
u\in L^2\big((0,T);\,H^2(\Omega)\cap H_0^1(\Omega)\big),
\end{align*}
where $L^2\big((0,T);H^2(\Omega)\cap H_0^1(\Omega)\big)$ denotes the Bochner space of square-integrable functions taking values in the space
$H^2(\Omega)\cap H_0^1(\Omega).$
By Theorem \ref{thm:Laplacian-application},
we immediately obtain the following conclusion,
which shows that such a conclusion
may fail on bounded $C^1$ domains; we omit the details here.
\begin{theorem}\label{thm:heat-application}
Let $\Omega,v, $ and $u$ be the same as in Theorem
\ref{thm:Laplacian-application}. Then, for any given $T\in(0,\infty)$, $u$ is the weak solution
of \eqref{eq:heat}, but
\begin{align*}
 u\notin L^2\big((0,T);\,H^2(\Omega)\cap H_0^1(\Omega)\big).
\end{align*}
\end{theorem}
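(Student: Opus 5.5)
We need to prove Theorem \ref{thm:heat-application}: $u(t)=e^{t\Delta_D}v$ is the weak solution of \eqref{eq:heat}, but $u\notin L^2((0,T);H^2(\Omega)\cap H_0^1(\Omega))$. The plan is to reduce everything to Theorem \ref{thm:Laplacian-application}, which says that $u(t)\notin H^2(\Omega)\cap H_0^1(\Omega)$ for every $t\in[0,\infty)$.

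\emph{Step 1: $u$ is the weak solution.} Since $-\Delta_D$ is a nonnegative self-adjoint operator on $L^2(\Omega)$ associated with the form $\int_\Omega\nabla u\cdot\nabla w$ on $H_0^1(\Omega)$, the semigroup $e^{t\Delta_D}$ is analytic. Take $v\in H_0^1(\Omega)$ (indeed $v\in D(-\Delta_D)$). Then $u\in C([0,T];H_0^1(\Omega))$, and $\partial_t u=\Delta_D u\in C([0,T];L^2(\Omega))$, because $v\in D(-\Delta_D)$. Testing against $w\in H_0^1(\Omega)$ gives, for a.e. $t$,
\begin{align*}
\langle \partial_t u(t),w\rangle+\int_\Omega\nabla u(t)\cdot\nabla w\,dx=0,
\end{align*}
together with $u(0)=v$. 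This is the weak formulation of \eqref{eq:heat}. Uniqueness follows from the standard energy estimate: the difference of two weak solutions has nonincreasing $L^2$ norm and vanishes initially, so it is zero.

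\emph{Step 2: $u$ is not $H^2$-valued.} Suppose, to the contrary, that $u\in L^2((0,T);H^2(\Omega)\cap H_0^1(\Omega))$. Elements of this Bochner space are (equivalence classes of) functions that take values in $H^2(\Omega)\cap H_0^1(\Omega)$ for almost every $t\in(0,T)$. The main point to check is that this space embeds into $L^2((0,T);L^2(\Omega))$ compatibly with $u$. That is, the representative in $H^2(\Omega)\cap H_0^1(\Omega)$ must coincide for a.e. $t$ with the $L^2$-valued function $t\mapsto e^{t\Delta_D}v$, which is continuous. This holds because $H^2(\Omega)\cap H_0^1(\Omega)\hookrightarrow L^2(\Omega)$ continuously and injectively, and Bochner equivalence classes are preserved under continuous injective embeddings. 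Consequently there is a set of positive measure, in fact of full measure in $(0,T)$, on which $u(t)\in H^2(\Omega)\cap H_0^1(\Omega)$. This contradicts Theorem \ref{thm:Laplacian-application}, which excludes this for every $t$.

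I expect no genuine obstacle. The only subtle point is the identification of representatives in Step 2, which is handled by the injectivity of the embedding. All of the real difficulty lies in Theorem \ref{thm:Laplacian-application} itself, which we are taking as given.
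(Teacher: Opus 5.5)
Your proposal is correct and follows the paper's route. The paper derives this statement directly from Theorem \ref{thm:Laplacian-application} and omits the details: since $u(t)\notin H^2(\Omega)\cap H_0^1(\Omega)$ for every $t\in[0,\infty)$, $u$ cannot take values in that space for a.e.\ $t\in(0,T)$. Your Step 1 (the semigroup check that $u$ is the weak solution, using $v\in D(-\Delta_D)$) and your Step 2 (the a.e.\ identification of representatives through the embedding into $L^2(\Omega)$) supply exactly those omitted details.
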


The proofs of Theorems \ref{thm:main}, \ref{thm:sharp},
and \ref{thm:Laplacian-application}  are  given in Section \ref{diyideng}.

We end this introduction by making some conventions on the notation. Let
${\mathbb N}:=\{1,2,\ldots\}$. For any $E\subset\mathbb{R}^n$,
we use $\mathbf{1}_E$ to denote its
characteristic function and use $\overline{E}$ to denote its closure
in $\mathbb{R}^n$.
For any given Banach space $X$ and $Y$, the notation
$X\hookrightarrow Y$ means that
$X$ is continuously embedded in
$Y$.
Finally, in all proofs we consistently retain
the notation introduced in the original theorem (or related statement).

\section{Proofs of Main Results}\label{diyideng}

In this section, we prove
Theorems \ref{thm:main}, \ref{thm:sharp}, and  \ref{thm:Laplacian-application}.
We  begin by recalling some necessary concepts and facts on real
and complex interpolations for Banach spaces. Suppose that $X_0$ and
$X_1$ are two complex Banach spaces. The couple $(X_0, X_1)$ is
said to be \emph{compatible} if $X_0$ and $X_1$ are continuously embedded
into a common Hausdorff topological vector space $X$. In this case, we
can naturally define two Banach spaces $X_0+X_1$ and $X_0 \cap X_1$.
More precisely, let
\begin{align*}
X_0+X_1:=\left\{x\in X:x=x_0+x_1,\ x_0\in X_0,\mbox{\ and\ }x_1\in X_1\right\}
\end{align*}
equipped with the norm
\begin{align*}
\|x\|_{X_0+X_1}:=\inf\{\|x_0\|_{X_0}+\|x_1\|_{X_1}:
x_0\in X_0,\ x_1\in X_1,\mbox{\ and\ }x=x_0+x_1\}
\end{align*}
and, for any $x\in X_0\cap X_1$, let
$\|x\|_{X_0 \cap X_1}:=\max\{\|x\|_{X_0},\,\|x\|_{X_1}\}$.
We refer to \cite[Lemma 2.3.1]{bl76} for some basic properties 
of $X_0+X_1$ and $X_0 \cap X_1$.

\begin{definition}
Let $(H_0,H_1)$ be a compatible couple of Hilbert spaces. For any $t\in(0,\infty)$ and $h\in H_0+H_1$, the
\emph{$K$-functional} $K(t,h;H_0,H_1)$ is defined by setting
$$
K(t,h;H_0,H_1):=\inf\left\{\|h_0\|_{H_0}+t\|h_1\|_{H_1}:h=h_0+h_1,\ h_0\in H_0,\ h_1\in H_1\right\}.
$$
Let $\theta\in(0,1)$. The \emph{Hilbert real interpolation space $(H_0,H_1)_{\theta,2}$ of type $(\theta,2)$}
is defined as the set of all $h\in H_0+H_1$ such that
$$
\|h\|_{(H_0,H_1)_{\theta,2}}:=\left\{\int_0^\infty\left[t^{-\theta}
K(t,h;H_0,H_1)\right]^2\,\frac{dt}{t}\right\}^{\frac12}
<\infty.
$$
\end{definition}

Next, we recall the concept of Calder\'on's complex interpolation spaces (see \cite[p.\,114, 3]{c64}
or \cite[p.\,88]{bl76}). To this end, let $S:= \{ z\in\mathbb{C}:0<\Re(z)<1\}$ and $\overline{S}$ be its closure in
$\mathbb{C}$, where $\Re(z)$ denotes the \emph{real part} of $z$.

\begin{definition}
Let $(X_0, X_1)$ be a compatible couple of complex Banach spaces.
\begin{enumerate}
\item[\rm(i)] The \emph{space $\mathcal{F}(X_0, X_1)$} is defined to be the
set of all functions $F:\overline{S}\to X_0+X_1$ such that
\begin{itemize}
\item [$\mathrm{(i)_1}$] $F$ is bounded and continuous on $\overline{S}$,
\item [$\mathrm{(i)_2}$] $F$ is analytic in $S$,
\item [$\mathrm{(i)_3}$] for any $j\in\{0,\,1\}$, the function $t\in\mathbb{R}
\longmapsto F(j+it)\in X_j$ is bounded and continuous.
\end{itemize}
Moreover, the space $\mathcal{F}(X_0,X_1)$ is equipped with the  following norm that,
for any $F\in\mathcal{F}(X_0,X_1)$,
\begin{align*}
\|F\|_{\mathcal{F}(X_0,X_1)}:=\max\left\{\sup_{z\in
i\mathbb{R}}\|F(z)\|_{X_0},\,\sup_{z\in 1+i\mathbb{R}}\|F(z)\|_{X_1}\right\}.
\end{align*}
\item[\rm(ii)] Let $\theta\in(0,1)$. The \emph{complex interpolation
space $[X_0,X_1]_\theta$} with respect to $(X_0, X_1)$ is defined to be the
set of all functions $f\in X_0+X_1$ such that $f=F(\theta)$ for some
$F\in\mathcal{F}(X_0,X_1)$, equipped with the norm
\begin{align*}
\|f\|_{[X_0,X_1]_\theta}:=\inf\left\{\|F\|_{\mathcal{F}(X_0,X_1)}:
f=F(\theta)\mbox{\ for\ some\ }F\in\mathcal{F}(X_0,X_1)\right\}.
\end{align*}
\end{enumerate}
\end{definition}
As a part of  \cite[Theorem 4.1.2]{bl76}, we have the following result.
\begin{lemma}
\label{lem:functoriality}
Let $(X_0,X_1)$ and $(Y_0,Y_1)$ be compatible couples of Banach spaces and
$T:X_0+X_1\to Y_0+Y_1$ be linear. Assume  that
$T:\ X_0\to Y_0$ and $T:\ X_1\to Y_1$ are bounded linear with operator norms at most $M_0$ and $M_1$ respectively. Then, for any given  $\theta\in (0,1)$,
$T:\ [X_0,X_1]_{\theta}\to [Y_0,Y_1]_{\theta}$
is bounded with operator norm at most $M_0^{1-\theta}M_1^\theta$.
In particular, if $X_j\hookrightarrow Y_j$ continuously for $j=0,1$, then
$[X_0,X_1]_{\theta}\hookrightarrow [Y_0,Y_1]_{\theta}.$
\end{lemma}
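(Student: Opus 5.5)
The statement just above is Lemma \ref{lem:functoriality}, which the paper quotes from \cite[Theorem 4.1.2]{bl76}. I would prove it directly from the definition of $\mathcal{F}(X_0,X_1)$, composing admissible functions with $T$ and inserting an exponential weight to balance the two endpoint bounds.

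\textbf{Boundedness on the sum.} Since $T$ is bounded from $X_j$ into $Y_j$ for $j\in\{0,1\}$, it is bounded from $X_0+X_1$ into $Y_0+Y_1$. Indeed, for any decomposition $x=x_0+x_1$ we have $Tx=Tx_0+Tx_1$, so
\begin{align*}
\|Tx\|_{Y_0+Y_1}\le M_0\|x_0\|_{X_0}+M_1\|x_1\|_{X_1}\le \max\{M_0,M_1\}\bigl(\|x_0\|_{X_0}+\|x_1\|_{X_1}\bigr).
\end{align*}
Taking the infimum over decompositions gives the bound. Hence, for any $F\in\mathcal{F}(X_0,X_1)$, the function $TF$ is bounded and continuous on $\overline{S}$ with values in $Y_0+Y_1$. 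It is analytic in $S$, since a bounded linear operator preserves analyticity: it commutes with difference quotients.

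\textbf{Endpoint conditions.} On $j+i\mathbb{R}$ we have $\|TF(j+it)\|_{Y_j}\le M_j\|F(j+it)\|_{X_j}$. Thus $t\mapsto TF(j+it)$ is bounded and continuous into $Y_j$. This shows $TF\in\mathcal{F}(Y_0,Y_1)$.

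\textbf{Balancing the constants.} Assume first $M_0,M_1>0$ and set
\begin{align*}
G(z):=M_0^{z-1}M_1^{-z}\,TF(z).
\end{align*}
The scalar factor is entire, has modulus $M_0^{\Re z-1}M_1^{-\Re z}$, and is bounded on $\overline{S}$. Therefore $G\in\mathcal{F}(Y_0,Y_1)$, with $\|G(it)\|_{Y_0}\le\|F(it)\|_{X_0}$ and $\|G(1+it)\|_{Y_1}\le\|F(1+it)\|_{X_1}$, so $\|G\|_{\mathcal{F}(Y_0,Y_1)}\le\|F\|_{\mathcal{F}(X_0,X_1)}$. Now take $x\in[X_0,X_1]_\theta$ with $x=F(\theta)$. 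Then
\begin{align*}
Tx=M_0^{1-\theta}M_1^{\theta}\,G(\theta)\in[Y_0,Y_1]_\theta,
\end{align*}
and $\|Tx\|_{[Y_0,Y_1]_\theta}\le M_0^{1-\theta}M_1^\theta\|F\|_{\mathcal{F}(X_0,X_1)}$. Taking the infimum over all such $F$ gives the stated operator norm.

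\textbf{Degenerate constants and the embedding.} If $M_0=0$ or $M_1=0$, I would replace them by $M_j+\varepsilon$ and let $\varepsilon\to0$. The embedding statement is the special case where $T$ is the identity. This presupposes that $(X_0,X_1)$ and $(Y_0,Y_1)$ live compatibly, so that the identity from $X_0+X_1$ into $Y_0+Y_1$ makes sense. That holds because $X_j\hookrightarrow Y_j$ within the common ambient space.

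The only step needing care is the weight normalization. The rest is routine verification of the three conditions in the definition of $\mathcal{F}$.
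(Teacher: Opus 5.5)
Your proof is correct and is essentially the standard argument behind Bergh--L\"ofstr\"om, Theorem 4.1.2. The paper only cites that theorem and gives no proof of its own. The argument composes each admissible function with $T$, multiplies by the entire factor $M_0^{z-1}M_1^{-z}$ to normalize the endpoint norms, and handles $M_0M_1=0$ by an $\varepsilon$-perturbation.
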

Let $s\in \mathbb{R},$ $\Omega\subset\mathbb{R}^n$ be a domain, and $F\subset \mathbb{R}^n$ be closed set. Recall that
the Sobolev spaces $\widetilde{H}^s(\Omega)$ and $H^s_F(\mathbb{R}^n)$ are defined, respectively, by setting
\begin{align*}
\widetilde{H}^s(\Omega):=\overline{C_{\rm c}^\infty(\Omega)}^{H^s(\mathbb{R}^n)}
\end{align*}
and
\begin{align*}
H^s_F(\mathbb{R}^n):=\left\{f\in H^s(\mathbb{R}^n):\ \mathrm{supp}\,(f)\subset F\right\}.
\end{align*}
Moreover, the restriction map $R_\Omega:\widetilde{H}^s(\Omega)\to \mathcal{D}'(\Omega)$ is defined by setting,
for any $f\in\widetilde{H}^s(\Omega),$  $R_\Omega f:=f|_{\Omega}.$
The following lemma is precisely \cite[Theorems 3.29 and 3.33]{m00}.
\begin{lemma}\label{dengtong}
\begin{itemize}
\item  [$\mathrm{(i)}$]
Let $\Omega$ be a bounded $C^0$ domain in $\mathbb{R}^{n}$. Then, for any $s\in\mathbb{R},$
\begin{align*}
\widetilde{H}^s(\Omega)=H^s_{\overline{\Omega}}(\mathbb{R}^n).
\end{align*}
\item  [$\mathrm{(ii)}$]  Let $\Omega$ be a bounded Lipschitz domain in $\mathbb{R}^{n}$. Then, for any $s\in(0,\infty)$ with $s\not\in \{\frac{1}{2},\frac{3}{2},\frac{5}{2},\ldots\},$
\begin{align*}
\widetilde{H}^s(\Omega)=H_0^s(\Omega).
\end{align*}
\end{itemize}
\end{lemma}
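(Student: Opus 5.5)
This lemma is taken verbatim from \cite[Theorems 3.29 and 3.33]{m00}, so it suffices to indicate how the two identities are proved there.

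\emph{Part (i).} The inclusion $\widetilde{H}^s(\Omega)\subset H^s_{\overline{\Omega}}(\mathbb{R}^n)$ holds because $H^s(\mathbb{R}^n)$-convergence implies convergence in $\mathcal{S}'(\mathbb{R}^n)$, and the limit of functions supported in $\overline{\Omega}$ is still supported in $\overline{\Omega}$. For the reverse inclusion, let $f\in H^s(\mathbb{R}^n)$ with $\mathrm{supp}\,(f)\subset\overline{\Omega}$. I would first use a partition of unity subordinate to a finite cover of $\partial\Omega$ by coordinate cylinders, plus one interior set. Multiplication by smooth cutoffs is bounded on $H^s(\mathbb{R}^n)$, so it suffices to treat a piece $\phi f$ supported near a boundary point. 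In the local chart, $\Omega$ lies above the graph of a continuous function, and there is a fixed direction $e$ with $\overline{\Omega}\cap B+\epsilon e\subset\Omega$ for small $\epsilon>0$.

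\emph{Key step for (i).} Translate the piece: $f_\epsilon:=(\phi f)(\cdot-\epsilon e)$ has compact support inside $\Omega$. Translation is continuous in $H^s(\mathbb{R}^n)$, so $f_\epsilon\to\phi f$ in $H^s(\mathbb{R}^n)$. Mollifying $f_\epsilon$ with a radius smaller than $\mathrm{dist}(\mathrm{supp}\, f_\epsilon,\partial\Omega)$ then gives $C_{\rm c}^\infty(\Omega)$ approximants. This step is where the $C^0$ (graph) assumption is used, and it works for every $s\in\mathbb{R}$.

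\emph{Part (ii).} The embedding $\widetilde{H}^s(\Omega)\hookrightarrow H_0^s(\Omega)$ follows from restriction: $\|R_\Omega g\|_{H^s(\Omega)}\le\|g\|_{H^s(\mathbb{R}^n)}$, and it maps $C_{\rm c}^\infty$ to $C_{\rm c}^\infty$. The converse requires that extension by zero be bounded from $H_0^s(\Omega)$ into $H^s(\mathbb{R}^n)$, that is,
\begin{align*}
\|\widetilde u\|_{H^s(\mathbb{R}^n)}\lesssim\|u\|_{H^s(\Omega)}\quad\text{for all } u\in C_{\rm c}^\infty(\Omega),
\end{align*}
where $\widetilde u$ denotes the extension of $u$ by zero. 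Given this bound, Cauchy sequences in $H_0^s(\Omega)$ lift to Cauchy sequences in $\widetilde{H}^s(\Omega)$, and the two spaces coincide.

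\emph{Main obstacle: proving the bound.} I would write $s=k+\sigma$ with $k\in\mathbb{N}\cup\{0\}$ and $\sigma\in[0,1)$, and reduce it to two facts.
\begin{itemize}
\item[(a)] For $u\in C_{\rm c}^\infty(\Omega)$, every derivative $\partial^\alpha u$ with $|\alpha|\le k$ vanishes near $\partial\Omega$. Hence its zero extension commutes with differentiation, and only the fractional part $\sigma$ needs to be controlled.
\item[(b)] For the Slobodeckij seminorm, the cross term
\begin{align*}
\int_\Omega\int_{\mathbb{R}^n\setminus\Omega}\frac{|v(x)|^2}{|x-y|^{n+2\sigma}}\,dy\,dx\sim\int_\Omega\frac{|v(x)|^2}{\mathrm{dist}(x,\partial\Omega)^{2\sigma}}\,dx
\end{align*}
must be bounded, where $v=\partial^\alpha u$. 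This requires the fractional Hardy inequality on Lipschitz domains, which holds exactly when $\sigma\neq\frac12$ for $v\in H_0^\sigma$. It is proved by flattening locally with the bi-Lipschitz map and applying the one-dimensional Hardy inequality. For $\sigma>\frac12$ one uses that $v$ vanishes on $\partial\Omega$; for $\sigma<\frac12$ no vanishing is needed.
\end{itemize}
The failure of Hardy's inequality at $\sigma=\frac12$ is precisely why the half-integers are excluded in (ii).
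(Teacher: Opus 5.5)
Your proposal is correct and takes the same route as the paper, which gives no argument of its own and simply quotes McLean's Theorems 3.29 and 3.33. Your sketch is a sound outline of the standard proofs behind those cited results: partition of unity, translation into $\Omega$ and mollification for (i), and, for (ii), boundedness of extension by zero on $C_{\rm c}^\infty(\Omega)$ via the Slobodeckij norm together with the fractional Hardy inequality for $\sigma\neq\frac12$.
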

From \cite[Theorem 2.9]{m11} and \cite[(1.10)]{c19}, we deduce the following conclusions.
\begin{lemma}\label{lem:Costabel}
\begin{itemize}
\item [$\mathrm{(i)}$] Let $\Omega$ be a Lipschitz domain in $\mathbb{R}^{n}$. Then, for any $s\in (1,\frac{3}{2}),$
\begin{align*}
H^s_0(\Omega)=\left\{u\in H^s(\Omega):\ \gamma(u)=0\ \text{on}\ \partial\Omega\right\}
=H^s(\Omega)\cap H^1_0(\Omega),
\end{align*}
where $\gamma$ denotes the trace operator on $\partial\Omega$.
\item [$\mathrm{(ii)}$] Let $n\in\mathbb{N}\cap [2,\infty)$. Then there exists a bounded $C^1$ domain $\Omega\subset\mathbb{R}^n$
such that,
for any $s\in (\frac{3}{2},\frac{5}{2})$,
\begin{align*}
H^{s}(\Omega)\cap H^1_0(\Omega)
=H^{s}_0(\Omega).
\end{align*}
\end{itemize}
\end{lemma}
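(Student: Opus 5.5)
For part (i), I would combine the trace theorem on Lipschitz domains with the identification of $H^s_0(\Omega)$ in Lemma \ref{dengtong}.

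For $s\in(\frac12,\frac32)$ the trace operator $\gamma:H^s(\Omega)\to H^{s-\frac12}(\partial\Omega)$ is bounded on a Lipschitz domain. For $s=1$ its kernel is exactly $H^1_0(\Omega)$. Since $H^s(\Omega)\hookrightarrow H^1(\Omega)$ for $s>1$, the trace of $u\in H^s(\Omega)$ computed in $H^s$ agrees with the one computed in $H^1$. Hence
\begin{align*}
\{u\in H^s(\Omega):\ \gamma(u)=0\}=H^s(\Omega)\cap H^1_0(\Omega)
\end{align*}
is immediate. It then remains to show that this kernel equals $H^s_0(\Omega)$.

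One inclusion is clear: $C_{\rm c}^\infty(\Omega)$ lies in the kernel and $\gamma$ is continuous, so $H^s_0(\Omega)$ is contained in the kernel. For the converse, by Lemma \ref{dengtong} (note $s\notin\frac12+\mathbb N$) it suffices to show the following: if $u\in H^s(\Omega)$ and $\gamma u=0$, then the zero extension $\widetilde u$ lies in $H^s(\mathbb R^n)$, so that $\widetilde u\in H^s_{\overline\Omega}(\mathbb R^n)=\widetilde H^s(\Omega)$. I would do this in three steps.
\begin{itemize}
\item[(a)] Localize with a partition of unity and flatten each patch by the bi-Lipschitz map $(x',x_n)\mapsto(x',x_n-\varphi(x'))$. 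This map preserves $H^\sigma$ for $\sigma\in[0,1]$, and hence also $\nabla u\in H^{s-1}$ with $s-1\in(0,\frac12)$.
\item[(b)] On the half-space, use that zero extension is bounded on $H^{s-1}$ for $s-1<\frac12$. Together with $\gamma u=0$, which gives $\nabla\widetilde u=\widetilde{\nabla u}$ with no boundary term, this yields $\widetilde u\in H^s$.
\item[(c)] Alternatively, invoke \cite[Theorem 2.9]{m11} directly, which states exactly this characterization.
\end{itemize}
The delicate point is that $H^s$ with $s>1$ is not invariant under Lipschitz changes of variables. I would therefore work with $u$ and $\nabla u$ separately: $\nabla u\in H^{s-1}$ with $s-1<\frac12<1$, and at that order the Lipschitz invariance does hold.

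For part (ii), the content is Costabel's construction \cite[(1.10)]{c19}. I would take $\Omega$ whose boundary is locally the graph of a $C^1$ function $\varphi$ whose derivative is too irregular, for instance $\nabla\varphi\notin H^{\varepsilon}_{\rm loc}$ near every boundary point for every $\varepsilon>0$. The inclusion $H^s_0(\Omega)\subset H^s(\Omega)\cap H^1_0(\Omega)$ is trivial. For the converse I would argue in four steps.
\begin{itemize}
\item[(a)] Take $u\in H^s(\Omega)\cap H^1_0(\Omega)$ with $s\in(\frac32,\frac52)$. Then $\nabla u\in H^{s-1}(\Omega)$ and $s-1\in(\frac12,\frac32)$, so $\gamma(\nabla u)\in H^{s-\frac32}(\partial\Omega)$ is a well-defined vector field of positive Sobolev order.
\item[(b)] Since $u$ vanishes on $\partial\Omega$, its tangential derivatives vanish there. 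Hence $\gamma(\nabla u)=g\,\nu$ for a scalar $g$, where $\nu$ is the unit normal.
\item[(c)] Because $\nu$ is built from $\nabla\varphi$, the requirement that $g\nu\in H^{s-\frac32}$ while $\nu$ has no fractional smoothness should force $g=0$ almost everywhere.
\item[(d)] Once both the trace and the normal derivative vanish, the same zero-extension argument as in part (i), now applied to $\nabla u\in H^{s-1}$ whose trace vanishes, gives $\widetilde u\in H^s_{\overline\Omega}(\mathbb R^n)=H^s_0(\Omega)$ by Lemma \ref{dengtong}. At $s=\frac52$ one simply stays below the critical value.
\end{itemize}

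The main obstacle is step (c): choosing $\varphi$ so that no nonzero scalar $g$ makes $g\nu$ Sobolev-regular of positive order. This requires a genuinely nowhere-regular normal. I would rely on Costabel's construction for this step rather than reprove it.
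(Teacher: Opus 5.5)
This is essentially the paper's route: the paper gives no argument beyond citing \cite[Theorem 2.9]{m11} for (i) and Costabel's \cite[(1.10)]{c19} for (ii). For (i), the identification with $H^s(\Omega)\cap H^1_0(\Omega)$ follows from the same remark you make, that the $H^s$-trace and the $H^1$-trace agree, and your proposal ultimately rests on the same two results. Your zero-extension sketch for (i) is sound. Be aware, though, that the illustrative condition in (ii), $\nabla\varphi\notin H^{\varepsilon}_{\rm loc}$ near every point, would not by itself give step (c): $\nabla\varphi$ could vanish on a closed nowhere dense set $C$ of positive measure with $\mathbf 1_C\in H^{\varepsilon}$, and then $g=\mathbf 1_C$ times a cutoff has $g\nu\in H^{\varepsilon}$. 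So deferring to Costabel's actual construction is necessary, not merely convenient.
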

The following lemma is a  part of
\cite[Corollaries 4.7 and 4.10 and Remark 3.6]{chm15}.
\begin{lemma}\label{shichazhi}
Let $\Omega$ be a bounded Lipschitz domain in $\mathbb{R}^{n}$. Then, for any $s_0,s_1\in\mathbb{R}$ and $\theta\in (0,1)$,
\begin{align*}
\left[\widetilde H^{s_0}(\Omega),\widetilde H^{s_1}(\Omega)\right]_\theta
=\left(\widetilde H^{s_0}(\Omega),\widetilde H^{s_1}(\Omega)\right)_{\theta,2}
=
\widetilde H^{(1-\theta)s_0+\theta s_1}(\Omega)
\end{align*}
and
\begin{align*}
\left[H^{s_0}(\Omega), H^{s_1}(\Omega)\right]_\theta
=
\left( H^{s_0}(\Omega),H^{s_1}(\Omega)\right)_{\theta,2}
=
H^{(1-\theta)s_0+\theta s_1}(\Omega)
\end{align*}
with equivalent norms.
\end{lemma}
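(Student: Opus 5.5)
The plan is to realize each family in the statement as a \emph{retract} of the scale $\{H^s(\mathbb R^n)\}_{s\in\mathbb R}$, with retraction and coretraction operators that do not depend on $s$. The interpolation identities then follow from the corresponding identities on $\mathbb R^n$ together with the abstract retraction principle. That principle is a direct consequence of Lemma \ref{lem:functoriality} and its real-method analogue for the $K$-functional.

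On $\mathbb R^n$ the statement is classical. Write $u\mapsto \widehat u$, so that $H^s(\mathbb R^n)$ becomes the weighted space $L^2((1+|\xi|^2)^{s}\,d\xi)$. The Stein--Weiss theorem for weighted $L^2$ spaces then gives
\begin{align*}
[H^{s_0}(\mathbb R^n),H^{s_1}(\mathbb R^n)]_\theta
=(H^{s_0}(\mathbb R^n),H^{s_1}(\mathbb R^n))_{\theta,2}
=H^{(1-\theta)s_0+\theta s_1}(\mathbb R^n)
\end{align*}
with equivalent norms. The $K$-functional can also be computed explicitly, since it is a pointwise minimization in $\xi$.

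For the second identity I would use Rychkov's universal extension operator $E_\Omega$. For a bounded Lipschitz domain this is a single linear operator, built from a partition of unity, special Lipschitz domains and Calder\'on-type reproducing formulas. It maps $H^s(\Omega)\to H^s(\mathbb R^n)$ boundedly for every $s\in\mathbb R$, and $R_\Omega E_\Omega=\mathrm{id}$. Here $R_\Omega:H^s(\mathbb R^n)\to H^s(\Omega)$ is bounded for every $s$ by the very definition of the quotient norm. If $H^{s_j}(\Omega)$ is written for the spaces in the couple, applying Lemma \ref{lem:functoriality} to $E_\Omega$ and $R_\Omega$ gives
\begin{align*}
[H^{s_0}(\Omega),H^{s_1}(\Omega)]_\theta = R_\Omega\big([H^{s_0}(\mathbb R^n),H^{s_1}(\mathbb R^n)]_\theta\big)=R_\Omega H^{s_\theta}(\mathbb R^n)=H^{s_\theta}(\Omega),
\end{align*}
where $s_\theta:=(1-\theta)s_0+\theta s_1$. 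The same argument works verbatim for the real method.

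For the spaces $\widetilde H^s(\Omega)$ I would first invoke Lemma \ref{dengtong}(i) to identify $\widetilde H^s(\Omega)=H^s_{\overline\Omega}(\mathbb R^n)$. Next let $\Omega^c_\circ:=\mathbb R^n\setminus\overline\Omega$, which is an (unbounded) exterior Lipschitz domain. Let $E_{c}$ be Rychkov's universal extension operator for $\Omega^c_\circ$; its construction is local and so also covers exterior domains of bounded Lipschitz sets. Define
\begin{align*}
P f:=f-E_{c}\big(f|_{\Omega^c_\circ}\big),\qquad f\in H^s(\mathbb R^n).
\end{align*}
Then $Pf\in H^s(\mathbb R^n)$ vanishes on $\Omega^c_\circ$, so $Pf\in H^s_{\overline\Omega}(\mathbb R^n)=\widetilde H^s(\Omega)$. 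Moreover $P$ is the identity on $\widetilde H^s(\Omega)$, because such $f$ have zero restriction to $\Omega^c_\circ$. Hence $\widetilde H^s(\Omega)$ is a complemented subspace of $H^s(\mathbb R^n)$, with the inclusion as coretraction and $P$ as retraction, both independent of $s$. The retraction argument again transfers the $\mathbb R^n$ identities.

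The main obstacle is the existence of an extension operator that is \emph{simultaneously} bounded on all $H^s$ with $s\in\mathbb R$, including negative and large $s$, for a merely Lipschitz boundary. Stein's extension operator only works for $s\ge0$, and reflection-type operators only work for bounded ranges of $s$. I would rely on Rychkov's construction for this. A fallback for the $\widetilde H^s$ case is duality: $\widetilde H^s(\Omega)=(H^{-s}(\Omega))^*$, and the duality theorems for complex and real interpolation of reflexive (Hilbert) couples reduce this case to the already established $H^s(\Omega)$ case.

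Finally, the equality of the complex and real $(\theta,2)$ methods needs no separate argument here, since both were identified with the same space. Abstractly, it also follows because, for a Hilbert couple with $H_1\hookrightarrow H_0$ densely, both methods coincide with the domain of $\Lambda^\theta$ for the positive self-adjoint operator $\Lambda$ associated with the couple (Lions).
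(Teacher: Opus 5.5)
Your argument is correct, and it differs from the paper's source only in how the $\widetilde H^s(\Omega)$ scale is handled.

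The paper gives no proof of its own; it quotes \cite[Corollaries 4.7 and 4.10 and Remark 3.6]{chm15}. As far as I recall that source:
\begin{itemize}
\item it treats the $H^s(\Omega)$ scale as you do, via Rychkov's universal extension operator and the retraction principle;
\item it identifies $[\cdot,\cdot]_\theta$ with $(\cdot,\cdot)_{\theta,2}$ using the abstract Hilbert-space fact;
\item it derives the $\widetilde H^s(\Omega)$ scale from the $H^{-s}(\Omega)$ scale by duality, using $\widetilde H^s(\Omega)\cong(H^{-s}(\Omega))^*$ and the theorem that interpolation of Hilbert couples commutes with taking duals. This is your fallback.
\end{itemize}

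Your main route is different. You exhibit one operator, $Pf=f-E_c(f|_{\mathbb R^n\setminus\overline\Omega})$, that projects $H^s(\mathbb R^n)$ onto $H^s_{\overline\Omega}(\mathbb R^n)$ for all $s$ at once. Your verifications are sound:
\begin{itemize}
\item $P$ is defined on $H^{s_0}(\mathbb R^n)+H^{s_1}(\mathbb R^n)$ because $E_c$ is universal;
\item $Pf$ vanishes off $\overline\Omega$ because restriction after $E_c$ is the identity;
\item $P$ fixes every distribution supported in $\overline\Omega$.
\end{itemize}

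The one step that needs a sentence more is the existence of $E_c$ for the unbounded set $\mathbb R^n\setminus\overline\Omega$. It does exist, since $\partial\Omega$ is compact and $\mathbb R^n\setminus\overline\Omega$ lies locally on one side of a Lipschitz graph. You can also avoid exterior domains entirely. Take a ball $B\supset\overline\Omega$ and $\chi\in C_{\rm c}^\infty(B)$ with $\chi=1$ near $\overline\Omega$, and set $Pf:=f-\chi E_{B\setminus\overline\Omega}(f|_{B\setminus\overline\Omega})-(1-\chi)f$. This uses Rychkov's theorem only for a bounded Lipschitz open set.

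What each route buys:
\begin{itemize}
\item Your retraction shows that $\widetilde H^s(\Omega)$ is complemented in $H^s(\mathbb R^n)$ uniformly in $s$. It therefore transfers any interpolation identity from $\mathbb R^n$ (real method with any $q$, $L^p$-based Bessel potential or Besov scales) without reflexivity or Hilbert structure.
\item The duality route needs an extension operator for $\Omega$ only, not for its complement. In the Hilbert setting of \cite{chm15} it also gives tight norm control, with equal norms after normalization.
\end{itemize}
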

As a part of \cite[Theorem 2.10]{m11}, we have the following conclusion.
\begin{lemma}\label{zuishen}
Let $\Omega$ be a bounded Lipschitz domain in $\mathbb{R}^{n}$ and $s\in
 [-\frac{1}{2},\infty)$. Then $H^s_{\partial \Omega}(\mathbb{R}^{n})=\{0\}.$
\end{lemma}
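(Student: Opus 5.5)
We would first reduce to the endpoint $s=-\frac12$. Indeed, for $s\ge-\frac12$ we have $H^s(\mathbb{R}^n)\hookrightarrow H^{-\frac12}(\mathbb{R}^n)$, and the support condition is unchanged. So $H^s_{\partial\Omega}(\mathbb{R}^n)\subset H^{-\frac12}_{\partial\Omega}(\mathbb{R}^n)$, and it suffices to show that any $f\in H^{-\frac12}(\mathbb{R}^n)$ with $\mathrm{supp}\,(f)\subset\partial\Omega$ vanishes. To do this we will show that $\langle f,\varphi\rangle=0$ for every $\varphi\in C_{\rm c}^\infty(\mathbb{R}^n)$.

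\textbf{Step 1 (duality and independence of the extension).} Since $\partial\Omega\subset\overline{\Omega}$, Lemma \ref{dengtong}(i) gives $f\in H^{-\frac12}_{\overline{\Omega}}(\mathbb{R}^n)=\widetilde H^{-\frac12}(\Omega)$. Hence there are $f_k\in C_{\rm c}^\infty(\Omega)$ with $f_k\to f$ in $H^{-\frac12}(\mathbb{R}^n)$. For $\varphi\in C_{\rm c}^\infty(\mathbb{R}^n)$ we then have
\begin{align*}
\langle f,\varphi\rangle=\lim_{k\to\infty}\int_\Omega f_k\,\varphi\,dx .
\end{align*}
Each integral depends only on $\varphi|_\Omega$. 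Moreover,
\begin{align*}
\left|\int_\Omega f_k\,\varphi\,dx\right|\le\|f_k\|_{H^{-\frac12}(\mathbb{R}^n)}\|g\|_{H^{\frac12}(\mathbb{R}^n)}
\end{align*}
for every extension $g$ of $\varphi|_\Omega$. Taking the infimum over extensions, the functional $\varphi\mapsto\langle f,\varphi\rangle$ factors through $\varphi|_\Omega$ and satisfies $|\langle f,\varphi\rangle|\le\|f\|_{H^{-\frac12}(\mathbb{R}^n)}\|\varphi|_\Omega\|_{H^{\frac12}(\Omega)}$. It therefore extends to a bounded linear functional $\Lambda_f$ on $H^{\frac12}(\Omega)$. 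This is the standard identification $\widetilde H^{-\frac12}(\Omega)\cong(H^{\frac12}(\Omega))'$.

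\textbf{Step 2 (density).} Next we use the fact that, for a bounded Lipschitz domain, $C_{\rm c}^\infty(\Omega)$ is dense in $H^{\frac12}(\Omega)$, that is, $H_0^{\frac12}(\Omega)=H^{\frac12}(\Omega)$. This is the endpoint counterpart of the trace theorem failing at $s=\frac12$ (Grisvard, McLean \cite{m00}). Given $\varphi$, choose $\psi_j\in C_{\rm c}^\infty(\Omega)$ with $\psi_j\to\varphi|_\Omega$ in $H^{\frac12}(\Omega)$. Extending each $\psi_j$ by zero, its support is a compact subset of $\Omega$, hence disjoint from $\partial\Omega\supset\mathrm{supp}\,(f)$, and so $\langle f,\psi_j\rangle=0$. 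By continuity of $\Lambda_f$,
\begin{align*}
\langle f,\varphi\rangle=\Lambda_f(\varphi|_\Omega)=\lim_{j\to\infty}\Lambda_f(\psi_j)=\lim_{j\to\infty}\langle f,\psi_j\rangle=0 .
\end{align*}
Hence $f=0$.

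The main obstacle is the density statement in Step 2. If we do not simply cite it, we would prove it by localizing with a partition of unity to Lipschitz-graph charts. In each chart we would multiply by cutoffs $\chi_\varepsilon(\mathrm{dist}(x,\partial\Omega))$ of logarithmic type, for instance $\chi_\varepsilon=1$ where the distance exceeds $\varepsilon$, $0$ where it is below $\varepsilon^2$, and interpolating linearly in $\log$ of the distance. We would then check, via the Gagliardo seminorm and a Hardy-type inequality near a Lipschitz boundary, that $\|(1-\chi_\varepsilon)u\|_{H^{\frac12}(\Omega)}\to0$. The logarithmic profile is exactly what makes the borderline $\frac12$-energy tend to zero, so this estimate is where the real work lies. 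Everything else is formal duality together with Lemma \ref{dengtong}(i).
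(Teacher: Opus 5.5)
The paper does not prove this lemma; it quotes it from \cite[Theorem 2.10]{m11}. Your argument is correct and takes a genuinely different, self-contained route. You reduce to $s=-\frac12$ and use Lemma \ref{dengtong}(i) to place $f$ in $\widetilde H^{-\frac12}(\Omega)$. You then prove $|\langle f,\varphi\rangle|\le\|f\|_{H^{-\frac12}(\mathbb{R}^n)}\|\varphi|_\Omega\|_{H^{\frac12}(\Omega)}$ and let $f$ annihilate the dense subspace $C_{\rm c}^\infty(\Omega)$ of $H^{\frac12}(\Omega)$. Every step checks out.

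Your route makes transparent that the lemma is exactly the dual form of $H_0^{\frac12}(\Omega)=H^{\frac12}(\Omega)$. Indeed, the annihilator of $C_{\rm c}^\infty(\Omega)$ in $(H^{\frac12}(\Omega))'\cong H^{-\frac12}_{\overline\Omega}(\mathbb{R}^n)$ is precisely $H^{-\frac12}_{\partial\Omega}(\mathbb{R}^n)$, which explains why $-\frac12$ is the threshold. The price is that all the analysis moves into the endpoint density theorem. You must therefore cite a source proving that theorem directly, e.g.\ Grisvard's Theorem 1.4.2.4. Some standard treatments run the implication the other way, proving $H^{-\frac12}_{\partial\Omega}(\mathbb{R}^n)=\{0\}$ first and deducing density by exactly your duality; citing such a source would make your argument circular.

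That direct route goes as follows. Localize with a smooth partition of unity, then flatten with $(x',x_n)\mapsto(x',x_n-\zeta(x'))$. This map is bi-Lipschitz with unit Jacobian, hence bounded on $H^{\pm\frac12}(\mathbb{R}^n)$. Finally, note that on $\{x_n=0\}$ a nonzero compactly supported $\sum_j g_j(x')\otimes\delta^{(j)}(x_n)$ cannot lie in $H^{-\frac12}(\mathbb{R}^n)$, because $\int_{\mathbb{R}}(1+|\xi|^2)^{-\frac12}\,d\xi_n=\infty$. This avoids capacity-type cutoff estimates entirely.

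Your fallback log-cutoff proof of density needs some care. The ordinary Hardy inequality fails at $s=\frac12$ (take $u\equiv1$), so the estimate should be run on bounded functions smooth up to the boundary, which are dense in $H^{\frac12}(\Omega)$, rather than on a general $u\in H^{\frac12}(\Omega)$.
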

Now, we prove Theorem \ref{thm:main}.
\begin{proof}[Proof of Theorem \ref{thm:main}]
Let $\Omega\subset\mathbb{R}^n$ be a bounded $C^1$ domain as in Lemma \ref{lem:Costabel}(ii) and
let $v\in H^1_0(\Omega)$ be the weak solution of the Dirichlet problem
\begin{align}\label{pos}
\begin{cases}
\Delta v(x)=1&\ \text{in}\ \ \Omega,\\
v=0&\ \text{on}\ \ \partial\Omega.
\end{cases}
\end{align}
That is, for any $\phi\in C^\infty_{\mathrm{c}}(\Omega),$
\begin{align}\label{feng}
\int_{\mathbb{R}^n}\nabla v(x)\cdot \nabla\phi(x)\,dx=-\int_{\mathbb{R}^n}\phi(x)\,dx.
\end{align}
By \cite[Theorem B]{jk95}, we conclude that $v\in H^{\frac{3}{2}}(\Omega)\cap H_0^1(\Omega)$.
From Lemma \ref{lem:Costabel}(ii),
we infer that
\begin{align}\label{e3.2}
\left[H_0^1(\Omega),H^2(\Omega)\cap H^1_0(\Omega)\right]_{\frac{1}{2}}
=\left[H_0^1(\Omega),H^2_0(\Omega)\right]_{\frac{1}{2}}.
\end{align}
Meanwhile, it is known that
$$\left[H_0^1(\Omega),H^2_0(\Omega)\right]_{\frac{1}{2}}
=R_\Omega\left(\widetilde{H}^{\frac{3}{2}}(\Omega)\right)$$
(see, for instance, \cite[Remark 3.32]{chm17}), which, combined with
\eqref{e3.2}, implies that
$$\left[H_0^1(\Omega),H^2(\Omega)\cap H^1_0(\Omega)\right]_{\frac{1}{2}}
=R_\Omega\left(\widetilde{H}^{\frac{3}{2}}(\Omega)\right).
$$
Thus, to prove \eqref{keypoint}, we only need to show that $v\not\in R_\Omega(\widetilde{H}^{\frac{3}{2}}(\Omega)).$
Assume for contradiction that
$v\in R_\Omega(\widetilde{H}^{\frac{3}{2}}(\Omega)).$
Then there exists $U\in \widetilde{H}^{\frac{3}{2}}(\Omega)
$
such that
$R_\Omega U=v.$
Let
\begin{align*}
\mu:=\Delta U-\mathbf 1_\Omega.
\end{align*}
Since $U\in \widetilde{H}^{\frac{3}{2}}(\Omega)\subset H^{\frac{3}{2}}(\mathbb{R}^n),$
it follows that $\Delta U\in H^{-\frac{1}{2}}(\mathbb{R}^n)$.  Observe that $\mathbf{1}_{\Omega}
\in L^2(\mathbb{R}^n)\subset H^{-\frac{1}{2}}(\mathbb{R}^n).$
Thus, $\mu\in H^{-\frac{1}{2}}(\mathbb{R}^n).$

By the definition of $\mu$ and Lemma \ref{dengtong}(i), we find that
\begin{align}\label{caomei}
\mathrm{supp}\,(\mu)\subset \overline{\Omega}.
\end{align}
Moreover, from \eqref{feng}, we deduce that, for any $\phi\in C_{\rm c}^\infty(\Omega),$
\begin{align*}
\langle \Delta v,\phi\rangle
=-\int_{\mathbb{R}^n}\nabla v(x)\cdot \nabla \phi(x)\,dx
=\int_{\mathbb{R}^n}\phi(x)\,dx
\end{align*}
and hence
\begin{align*}
\langle\mu,\phi\rangle
=\langle\Delta  U,\phi\rangle-\int_\Omega \phi(x)\,dx
=\langle\Delta  v,\phi\rangle-\int_\Omega \phi(x)\,dx=0.
\end{align*}
This, together with \eqref{caomei}, implies that
$\mathrm{supp}\,(\mu)\subset\partial\Omega$.
By this and $\mu\in H^{-\frac{1}{2}}(\mathbb{R}^n)$, we find that $\mu\in H^{-\frac{1}{2}}_{\partial \Omega}(\mathbb{R}^n),$
which, combined with Lemma \ref{zuishen}, further implies that $\mu=0.$
Thus,
\begin{align}\label{niua}
\Delta U=\mathbf 1_\Omega
\ \ \text{in}\ \ \mathcal D'(\mathbb{R}^n).
\end{align}
Let $\varphi\in C^\infty_{\mathrm{c}}(\mathbb{R}^n)$  satisfy  $\varphi\equiv1$
in $\overline{\Omega}.$
Since $\mathrm{supp}\,(U)\subset \overline{\Omega}$ and $\Delta \varphi\equiv0$ in $\overline{\Omega}$, it follows that
\begin{align}\label{niua2}
\langle \Delta U,\varphi\rangle
=\langle  U,\Delta\varphi\rangle
=0.
\end{align}
On the other hand,
\begin{align*}
\langle \mathbf{1}_{\Omega},\varphi\rangle
=\int_{\Omega}\varphi(x)\,dx
=|\Omega|.
\end{align*}
From this, \eqref{niua}, and \eqref{niua2}, we infer that $|\Omega|=0,$ which is impossible.
Thus, $v\notin R_\Omega(\widetilde{H}^{\frac{3}{2}}(\Omega))$, which 
completes the proof of Theorem \ref{thm:main}.
\end{proof}

\begin{proof}[Proof of Theorem \ref{thm:sharp}]
We first show (i).
Let $\Omega$ be a bounded Lipschitz domain in $\mathbb{R}^{n}$ and $s\in (1,\frac{3}{2}).$
By the fact that $H^1_0(\Omega)\hookrightarrow H^1(\Omega)$ and $H^2(\Omega)\cap H^1_0(\Omega)\hookrightarrow H^2(\Omega)$
and by
Lemmas \ref{lem:functoriality} and \ref{shichazhi},
we conclude that
\begin{align*}
\left[H_0^1(\Omega),H^2(\Omega)\cap H^1_0(\Omega)\right]_{s-1}
\hookrightarrow
\left[H^1(\Omega),H^2(\Omega)\right]_{s-1}=H^s(\Omega).
\end{align*}
On the other hand, using Lemma \ref{lem:functoriality} again, we obtain
\begin{align*}
\left[H_0^1(\Omega),H^2(\Omega)\cap H^1_0(\Omega)\right]_{s-1}
\hookrightarrow
\left[H_0^1(\Omega),H_0^1(\Omega)\right]_{s-1}
=H_0^1(\Omega).
\end{align*}
Thus,
\begin{align}\label{danbian}
\left[H_0^1(\Omega),H^2(\Omega)\cap H^1_0(\Omega)\right]_{s-1}
\hookrightarrow
H^s(\Omega)\cap H_0^1(\Omega).
\end{align}
By Lemma \ref{dengtong}, we obtain
$\widetilde H^1(\Omega)=H^1_0(\Omega)$ and
$\widetilde H^2(\Omega)
\hookrightarrow
H^2(\Omega)\cap H^1_0(\Omega).$
This, together with
 Lemma \ref{lem:functoriality}, implies that
\begin{align}\label{xianzai}
\left[\widetilde H^1(\Omega),\widetilde H^2(\Omega)\right]_{s-1}
\hookrightarrow
\left[H_0^1(\Omega),H^2(\Omega)\cap H^1_0(\Omega)\right]_{s-1}.
\end{align}
Using Lemmas \ref{shichazhi} and
\ref{lem:Costabel}(i), we find that
\begin{align*}
\left[\widetilde H^1(\Omega),\widetilde H^2(\Omega)\right
]_{s-1}
=\widetilde H^s(\Omega)=H^s(\Omega)\cap H_0^1(\Omega).
\end{align*}
This, together with \eqref{xianzai}, implies that
\begin{align*}
H^{s}(\Omega)\cap H^1_0(\Omega)
\hookrightarrow
\left[H_0^1(\Omega),H^2(\Omega)\cap H^1_0(\Omega)\right]_{s-1},
\end{align*}
which, combined with \eqref{danbian}, further implies that \eqref{keypoint} holds. This proves (i).

Let $\Omega\subset\mathbb{R}^n$ be the bounded $C^1$ domain as in Lemma \ref{lem:Costabel}(ii) and let
$s\in (\frac{3}{2},2)$. Then, by Lemmas
\ref{lem:Costabel}(ii), \ref{dengtong}(ii), and \ref{shichazhi},
we find that
\begin{align*}
\left[H_0^1(\Omega),H^2(\Omega)\cap H^1_0(\Omega)\right]_{s-1}
&=\left[H_0^1(\Omega),H_0^2(\Omega)\right]_{s-1}=\left[\widetilde{H}^1(\Omega),\widetilde{H}^2(\Omega)\right]_{s-1}\\
&=\widetilde{H}^s(\Omega)
=H_0^s(\Omega)
=H^{s}(\Omega)\cap H^1_0(\Omega)
\end{align*}
with equivalent norms.
This shows (ii), which completes the proof of Theorem \ref{thm:sharp}.
\end{proof}

Let $\Omega\subset\mathbb{R}^n$ be a bounded Lipschitz domain.
For any $u,\phi\in H_0^1(\Omega),$ define
\begin{align*}
a(u,\phi):=\int_{\Omega}\nabla u(x) \cdot \nabla \phi(x) \,dx.
\end{align*}
As is well known, this form $a$ is bounded, positive, and coercive  on $H_0^1(\Omega).$
Denote the non-negative self-adjoint operator on $L^2(\Omega)$
associated with $a$ by $-\Delta_{D}$.  The operator $-\Delta_{D}$ is called the \emph{Dirichlet Laplacian} on
$L^2(\Omega).$
Denote the spectral decomposition of $-\Delta_{D}$ by $E(\lambda)$ (\cite[Chapter VIII]{rs72}).
Let $F:[0,\infty)\to \mathbb C$ be a measurable function.
Recall  that $F$ admits a
unique normal operator
$F(-\Delta_D)$ on $L^2(\Omega)$ with domain
\begin{align*}
D(F(-\Delta_D))
=
\left\{
f\in L^2(\Omega):
\int_0^\infty |F(\lambda)|^2\,d(E(\lambda)f,f)<\infty
\right\}.
\end{align*}
Moreover, for any $f\in D(F(-\Delta_D))$ and
$g\in L^2(\Omega),$
\begin{align*}
(F(-\Delta_D)f,g)
=
\int_0^\infty F(\lambda)\,d(E(\lambda)f,g)
\end{align*}
and
\begin{align}\label{jiyi}
\left\|F(-\Delta_D)f\right\|_{L^2(\Omega)}^2
=
\int_0^\infty |F(\lambda)|^2\,d(E(\lambda)f,f).
\end{align}
In particular, taking
$F(t):=t$, we obtain
\begin{align*}
D(-\Delta_D)=\left\{f\in L^2(\Omega):\ \int_0^\infty\lambda^2\,d(E(\lambda)f,f)<\infty\right\}.
\end{align*}

Now, we prove Theorem \ref{thm:Laplacian-application}.
\begin{proof}[Proof of Theorem \ref{thm:Laplacian-application}]
Let $t\in [0,\infty)$. Then, from the functional calculus and \eqref{jiyi}, we
deduce that
\begin{align*}
\left\|-\Delta_De^{t\Delta_D}v\right\|_{L^2(\Omega)}^2
=\int_0^\infty\lambda^2e^{-2t\lambda}\,d(E(\lambda)v,v)<\infty.
\end{align*}
Thus, $u(t)\in D(-\Delta_D)$.

Next, we prove
\begin{align}\label{pidou}
 u(t)\notin H^2(\Omega)\cap H_0^1(\Omega).
\end{align}
If $u(t)\in H^2(\Omega)\cap H_0^1(\Omega)$,
then, by Lemma \ref{lem:Costabel}(ii), we find that $u(t)\in H_0^2(\Omega)$.
From the definition of $H_0^2(\Omega)$, we infer that there exists a sequence $\{g_k\}_{k\in\mathbb{N}}\subset C^\infty_{\mathrm{c}}(\Omega)$ such that
\begin{align*}
\lim_{k\to\infty}g_k=u(t)\ \ \text{in}\ \ H^2(\Omega).
\end{align*}
Since $\{g_k\}_{k\in\mathbb{N}}\subset C^\infty_{\mathrm{c}}(\Omega)$, it follows that, for any $k\in\mathbb{N},$
\begin{align*}
\int_{\Omega}\Delta g_k(x)\,dx=0.
\end{align*}
Letting $k\to\infty$, we obtain
\begin{align}\label{renzhen}
\int_{\Omega}\Delta u(t)(x)\,dx=0.
\end{align}
Recall that $v$ is the weak solution of the Dirichlet problem \eqref{pos}.
Then
\begin{align*}
\Delta u(t)=\Delta_D e^{t\Delta_D}v= e^{t\Delta_D}\Delta_D v
=e^{t\Delta_D}1.
\end{align*}
By this and  the fact that
$-\Delta_D$ is non-negative and self-adjoint, we have
\begin{align*}
\int_{\Omega}\Delta u(t)(x)\,dx
=\int_{\Omega}e^{t\Delta_D}1(x)\,dx
=(e^{t\Delta_D}1,1)_{L^2(\Omega)}
=\left\|e^{t\Delta_D/2}1\right\|_{L^2(\Omega)}>0.
\end{align*}
This contradicts \eqref{renzhen}. Thus, \eqref{pidou} holds, which completes the proof of Theorem \ref{thm:Laplacian-application}.
\end{proof}

\section*{Declaration of competing interest}

The authors have no conflicts to disclose.

\section*{Data availability}

Data sharing is not applicable to this article as no new data were created or analyzed in this study.

\bigskip

\noindent
Xiaosheng Lin

\smallskip

\noindent
School of Mathematical Sciences, Jimei University,
Xiamen 361005, The People's Republic of China

\smallskip

\noindent {\it E-mail}: \texttt{xslin@jmu.edu.cn}

\bigskip

\noindent Dachun Yang (Corresponding author), Wen Yuan and Yangyang Zhang

\smallskip

\noindent Laboratory of Mathematics and Complex Systems
(Ministry of Education of China),
School of Mathematical Sciences, Institute for Advanced Study,
Beijing Normal University,
Beijing 100875, The People's Republic of China

\smallskip

\noindent{\it E-mails:} \texttt{dcyang@bnu.edu.cn} (D. Yang)

\noindent\phantom{{\it E-mails:}} \texttt{wenyuan@bnu.edu.cn} (W. Yuan)

\noindent\phantom{{\it E-mails:}} \texttt{yangyzhang@bnu.edu.cn} (Y. Zhang)

\bigskip

\noindent Sibei Yang

\medskip

\noindent School of Mathematics and Statistics, Lanzhou University, Lanzhou 730000, The People's Republic of China

\smallskip

\noindent{\it E-mail:} \texttt{yangsb@lzu.edu.cn}

\end{document}